\documentclass[12pt]{article}
\usepackage{authblk}
\usepackage[a4paper,margin=1.23in]{geometry}
\usepackage{amsmath,amssymb,amsthm}
\usepackage{mathrsfs}
\usepackage{cite}
\usepackage[colorlinks=true,citecolor=blue,linkcolor=blue,urlcolor=blue]{hyperref}

\newtheorem{theorem}{Theorem}
\newtheorem{lemma}{Lemma}

\newtheorem{conjecture}{Conjecture}
\theoremstyle{definition}
\newtheorem{definition}{Definition}
\theoremstyle{remark}
\newtheorem{remark}{Remark}

\DeclareMathOperator{\Tr}{tr}
\newcommand{\eps}{\varepsilon}

\allowdisplaybreaks

\title{Proofs of two conjectures on generalizations of Brouwer's Laplacian conjecture} 
\author{Junying Lu\thanks{ School of Mathematics and Statistics, Nanjing University of Science and Technology, Nanjing 210094, P.R. China (Email: lujunying\_math@163.com)}~, Jiabao Yang\thanks{School of Mathematics, Nanjing University, Nanjing 210093, P.R. China (Email: jbyang1215@nju.edu.cn)}}

\date{}

\begin{document}
\maketitle

\begin{abstract}
Let $G=(V,E)$ be a simple graph of order $n$ and let
$\lambda_1(G)\ge \cdots \ge \lambda_n(G)$ be the eigenvalues of its Laplacian matrix. Brouwer conjectured that for every $1\le k\le n$, $\sum_{i=1}^k\lambda_i(G)\le |E|+\binom{k+1}{2}$, which was recently confirmed by Kothari and Tudose. Before Brouwer’s conjecture was proved, Lew (JCT-B, 2026) established a weaker form of Brouwer’s Laplacian eigenvalue inequality and proposed two conjectures for upper bounds on the sum of the $k$ largest Laplacian eigenvalues, one in terms of the matching number and the other in terms of the vertex-cover number.  Using Brouwer's Laplacian inequality, we prove both conjectures.
\end{abstract}
\noindent\textbf{AMS classification:} 05C50\\
\noindent\textbf{Keywords:} Laplacian eigenvalue sums; Brouwer's conjecture; matching number; vertex-cover number.

\section{Introduction}
All graphs in this note are finite and simple. Let $G=(V(G),E(G))$ be a graph with $|V(G)|=n$, and write
$e(G)=|E(G)|$. For a vertex $v\in V(G)$, the degree of $v$ in $G$, denoted by $d(v)$, is the number of edges incident to $v$. The Laplacian matrix of $G$, denoted by $L(G)$, is the $n\times n$ matrix defined by 
\[
L(G)_{u,v}=\begin{cases}
    d(u), & \text{if } u=v; \\
    -1, & \text{if } uv\in E(G); \\
    0, & \text{otherwise}.
\end{cases}   
\]
for all $u,v\in V(G)$. We order the eigenvalues of
$L(G)$ as
\[
 \lambda_1(G)\ge \lambda_2(G)\ge \cdots \ge \lambda_n(G)=0.
\]
For $0\le k\le n$, set
\[
 s_k(G)=\sum_{i=1}^k \lambda_i(G).
\]

Brouwer (see Section 3.11.1 in \cite{brouwer2012}) proposed the following conjecture about the sum of the $k$ largest Laplacian eigenvalues of a graph $G$.
\begin{conjecture}[Brouwer's Laplacian conjecture]\label{conj-B}
For every graph of order $n$ and every integer $1\le k\le n$,
\[
 s_k(G)\le e(G)+\binom{k+1}{2}.
\]    
\end{conjecture}

Conjecture \ref{conj-B} has motivated a substantial body of work on extremal problems for partial sums of Laplacian eigenvalues. The boundary cases \(k=1\), \(k=n-1\), and \(k=n\) follow from standard spectral estimates or from the trace identity, while the first nontrivial case \(k=2\) was proved by Haemers, Mohammadian and Tayfeh-Rezaie \cite{HaemersMohammadianTayfehRezaie2010}. Subsequently, the case \(k=3\) was settled by Wang, Lin, Zhang and Ye \cite{WangLinZhangYe2026}. 
 
Beyond results for fixed values of $k$, Conjecture \ref{conj-B} has been verified for numerous graph classes, including trees, unicyclic and bicyclic graphs, threshold graphs, split graphs, cographs, and regular graphs; see, e.g., \cite{DuZhou2012,Mayank2010,Berndsen2012}. It is also known to hold for graph families subject to restrictions involving girth, clique number, vertex-cover number, diameter, the number of pendant vertices, arboricity, or degree variance; see, e.g., \cite{Chen2018,Chen2019,GaniePirzadaRatherTrevisan2020,Cooper2021}. 
Further developments include preservation results under graph operations~\cite{LinWang2025} and reductions via a graph-dependent Brouwer critical index~\cite{TorresTrevisan2026}.

Another line of work concerns approximate and parameter-dependent bounds. Lew established the universal estimates $s_k(G)\le e(G)+k^2+15k\log k+65k$~\cite{Lew2026b} and $s_k(G)\le e(G)+k^2$~\cite{Lew2025}, as well as the asymptotically sharper estimate $s_k(G)\le e(G)+\binom{k}{2}+(4k-2)\sqrt{k}$~\cite{Lew2026a}. He also proved the matching number bound $s_k(G)\le e(G)+k\nu(G)+\lfloor k/2\rfloor$~\cite{Lew2026b}, where $\nu(G)$ denotes the matching number of $G$. In the same paper, Lew proposed the following two conjectures.
\begin{conjecture}[Lew \cite{Lew2026b}]\label{conj:matching}
Let $G$ be a graph with $n$ non-isolated vertices.  If $1\le k\le n-2$, then
\[
 s_k(G)\le e(G)+k\nu(G).
\]
\end{conjecture}

Let $\tau(G)$ be the vertex-cover number of $G$.
\begin{conjecture}[Lew \cite{Lew2026b}]\label{conj:cover}
Let $G$ be a graph of order $n$.  If $\tau(G)\le k\le n$, then
\[
 s_k(G)\le e(G)+k\tau(G)-\binom{\tau(G)}{2}.
\]
\end{conjecture}
The bound in Conjecture \ref{conj:matching} is stronger than that in Conjecture \ref{conj-B} for $2\nu(G) \le k\le n$, and the bound in Conjecture \ref{conj:cover} is stronger than that in Conjecture \ref{conj-B} for $\tau(G)<k\le n$.

More recently, Kothari and Tudose \cite{KothariTudose2026} proved Conjecture \ref{conj-B} by deriving it from the Grone--Merris--Bai theorem \cite{GroneMerris1994,Bai} for split graphs. They also proved the converse implication, thereby establishing the equivalence between Conjecture \ref{conj-B} and the Grone--Merris--Bai theorem. We shall use their result in the following form.

\begin{theorem}[Kothari and Tudose \cite{KothariTudose2026}]
\label{thm:brouwer}
Conjecture~\ref{conj-B} holds.
\end{theorem}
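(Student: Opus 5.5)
The plan is to derive the inequality from the Grone--Merris--Bai theorem, following the route of Kothari and Tudose. I would prove it for split graphs first and then reduce an arbitrary graph to a split graph. The Grone--Merris--Bai theorem says that the Laplacian spectrum of $G$ is majorized by the conjugate degree sequence $d^*_i=|\{v: d(v)\ge i\}|$. Consequently
\[
 s_k(G)\le \sum_{i=1}^k d_i^*=\sum_{v\in V(G)}\min\{d(v),k\}.
\]
It therefore suffices to bound $\sum_v \min\{d(v),k\}$ by $e(G)+\binom{k+1}{2}$ for the graphs we reduce to.

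\textbf{Step 1: split graphs.} Let $H$ be a split graph with clique $K$ and independent set $I$. I would split $\sum_v\min\{d(v),k\}$ into the contributions of $K$ and of $I$.
\begin{itemize}
\item Each vertex $x\in I$ contributes at most $d(x)$. Summing over $I$ gives at most the number $e(K,I)$ of clique--independent edges.
\item If $|K|\le k$, each vertex $u\in K$ contributes at most $|K|-1+d_I(u)$, where $d_I(u)$ is the number of neighbours of $u$ in $I$.
\item If $|K|>k$, each vertex of $K$ contributes at most $k$.
\end{itemize}
Careful double counting should then give $\sum_v\min\{d(v),k\}\le e(H)+\binom{k+1}{2}$. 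The key point is that only the at most $k$ "top" vertices of $K$ can see their $I$-edges counted twice, and the surplus over $e(H)$ is at most $\sum_{j\le k} j=\binom{k+1}{2}$. Making this precise is an inequality about $0/1$ bipartite incidence plus a clique term, and I expect it to go through by sorting $K$ by $d_I$ and comparing term by term.

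\textbf{Step 2: reduction from general $G$ to a split graph.} Fix $k$ and let $P$ be the orthogonal projection onto the top $k$ eigenvectors of $L(G)$. Then
\[
 s_k(G)=\operatorname{tr}(PL(G))=\sum_{uv\in E(G)}\|P(e_u-e_v)\|^2,
\]
where each summand lies in $[0,2]$. By Ky Fan, $s_k(H)\ge\operatorname{tr}(PL(H))$ for every graph $H$ on $V(G)$. I would choose a vertex set $S$ (for instance, the vertices where $P$ has large diagonal weight $\|Pe_v\|^2$) and form $H$ by making $S$ a clique and deleting all edges inside $V\setminus S$.

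The aim is to show
\[
 \operatorname{tr}(PL(H))-e(H)\ \ge\ \operatorname{tr}(PL(G))-e(G).
\]
This amounts to two estimates:
\begin{itemize}
\item each deleted edge $uv$ with $u,v\notin S$ has $\|P(e_u-e_v)\|^2\le 1$;
\item each added edge inside $S$ has $\|P(e_u-e_v)\|^2\ge 1$.
\end{itemize}
Granting this, $s_k(G)-e(G)\le s_k(H)-e(H)\le\binom{k+1}{2}$ by Step 1.

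\textbf{Main obstacle.} Step 2 is the hard part: the threshold choice of $S$ need not make both edge estimates hold simultaneously. I would first try the choice $S=\{v:\|Pe_v\|^2>1/2\}$. This works on both sides when $P$ restricted to the relevant pairs is diagonal-dominant, since
\[
 \|P(e_u-e_v)\|^2=\|Pe_u\|^2+\|Pe_v\|^2-2\langle Pe_u,Pe_v\rangle,
\]
but the cross term can spoil it. If the threshold fails, I would instead run an averaging or uncrossing argument. Either choose $S$ by randomized rounding of the diagonal weights and show that the expected change in $\operatorname{tr}(PL)-e$ is nonnegative, or proceed by an exchange process that moves one vertex at a time between $S$ and its complement and never decreases $\operatorname{tr}(PL)-e$, so that the process terminates at a split graph.
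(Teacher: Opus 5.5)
\textbf{There is a genuine gap in Step 2.} The paper does not reprove this statement; it only cites Kothari and Tudose. Their route, as the paper summarizes it (Grone--Merris--Bai for split graphs, then a reduction), matches your outline at the top level.

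Your Step 1 is correct, and simpler than you make it. Each clique vertex contributes at most $k$, and $|K|k\le\binom{|K|}{2}+\binom{k+1}{2}$ since $(|K|-k)(|K|-k-1)\ge 0$. The vertices of $I$ contribute at most $e(K,I)$, so no sorting is needed.

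Step 2 carries all the content of the theorem. As you formulated it, it is false. So no choice of $S$, randomized rounding, or exchange process with $P$ held fixed can establish it.

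Take $G=C_5$ (vertices labelled cyclically by $\mathbb{Z}_5$) and $k=2$. The largest Laplacian eigenvalue $(5+\sqrt5)/2$ has multiplicity exactly $2$, so $P$ is forced: $P_{uv}=\tfrac25\cos\bigl(4\pi(u-v)/5\bigr)$. A direct computation gives $\|P(e_u-e_v)\|^2=1+1/\sqrt5$ for the five cycle edges and $1-1/\sqrt5$ for the five chords. Hence for every graph $H$ on $V(C_5)$,
\[
\Tr\bigl(PL(H)\bigr)-e(H)=\tfrac{1}{\sqrt5}\bigl(|E(H)\cap E(C_5)|-|E(H)\setminus E(C_5)|\bigr).
\]
For $H=C_5$ this equals $\sqrt5=\eps_2(C_5)$. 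A value of at least $4/\sqrt5$ forces $H$ to be one of three graphs:
\begin{itemize}
\item four cycle edges and no chord, i.e.\ a $P_5$, which has an induced $2K_2$;
\item all five cycle edges and no chord, i.e.\ $C_5$ itself;
\item all five cycle edges plus one chord, which has an induced $C_4$.
\end{itemize}
None of these is split, so every split $H$ gives at most $3/\sqrt5<\sqrt5$.

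The gain from passing to a split graph appears only after re-optimizing the projection. For example, $K_3\cup 2K_1$ has $\eps_2=3$, yet its value under this $P$ is at most $1/\sqrt5$. Even the natural iteration ``replace $G$ by $\{uv:\|P(e_u-e_v)\|^2>1\}$ and recompute $P$'' stalls, because $C_5$ is a fixed point of it.

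So a genuinely different reduction mechanism is needed, and that is exactly the nontrivial content of the cited result. Without it, your argument only recovers the long-known split-graph case.
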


Li and Guo \cite{LiGuo2022} conjectured the corresponding characterization of equality. Cai, Chen, Yang, and Zhang \cite{CaiChenYangZhang2026} subsequently proved that, for each $1\le k\le n-1$, equality in Brouwer's bound holds if and only if $G$ is a threshold graph with clique number $k+1$.

In this paper, we prove both conjectures proposed by Lew.

\begin{theorem}\label{thm:main}
Conjectures~\ref{conj:matching} and \ref{conj:cover} hold.
\end{theorem}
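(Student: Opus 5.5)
We reduce both statements to Theorem~\ref{thm:brouwer} together with two standard tools. The first is Ky Fan's inequality $s_k(G_1\cup G_2)\le s_k(G_1)+s_k(G_2)$ for edge-disjoint spanning subgraphs. The second is Laplacian interlacing under vertex deletion, $\lambda_{i+r}(G)\le \lambda_i(G-S)+r$ for $|S|=r$. We write $t=\tau(G)$ and $\nu=\nu(G)$.

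\emph{Conjecture~\ref{conj:cover}.} Fix a minimum vertex cover $C$ with $|C|=t$, and let $I=V\setminus C$, which is independent. Split $G=H\cup B$, where $H=G[C]$ and $B$ is the bipartite graph of all $C$--$I$ edges. Since $k\ge t$ and $H$ has only $t$ non-isolated vertices, $s_k(H)=\Tr L(H)=2e(H)\le e(H)+\binom{t}{2}$. By Ky Fan, it then suffices to prove
\[
 s_k(B)\le e(B)+kt-t(t-1)\qquad (k\ge t).
\]
This is exactly tight for $B=K_{t,n-t}$, whose spectrum is $n$, then $n-t$ with multiplicity $t-1$, then $t$.

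For this inequality, write $s_k(B)=s_t(B)+\sum_{i=t+1}^k\lambda_i(B)$. Deleting $C$ from $B$ leaves only isolated vertices, so interlacing gives $\lambda_{t+1}(B)\le t$. The inequality is therefore reduced to
\[
 s_t(B)\le e(B)+t .
\]
This is the step I expect to be the main obstacle. The Grone--Merris--Bai form $s_t\le\sum_v\min(d(v),t)$ only yields $e(B)+t^2$, so more structure is needed.

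My first attempt at $s_t(B)\le e(B)+t$ will use the bipartite complement $B'=K_{t,n-t}-E(B)$. The matrices $L(B)$ and $L(B')$ sum to $L(K_{t,n-t})$, and the spectrum of $L(K_{t,n-t})$ is explicit. Combining this with Theorem~\ref{thm:brouwer} applied to a suitable auxiliary graph (for instance $B'$ with a clique added on $C$), and with the trace identity, should turn Brouwer's $\binom{t+1}{2}$ term into the required $+t$. If that fails, I will instead induct on $t$ by deleting a vertex of $C$ of maximum degree and using interlacing.

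\emph{Conjecture~\ref{conj:matching}.} For $k\le 2\nu-1$, Theorem~\ref{thm:brouwer} gives directly
\[
 s_k(G)\le e(G)+\tbinom{k+1}{2}\le e(G)+k\nu,
\]
since $k+1\le 2\nu$. So assume $2\nu\le k\le n-2$. Take a maximum matching $M$, let $U=V(M)$ with $|U|=2\nu$, and let $I=V\setminus U$, which is independent. By the no-augmenting-path property, for each $uv\in M$ one of the following holds: at most one of $u,v$ has neighbours in $I$, or both are adjacent to the same single vertex of $I$ (forming a triangle). Hence there is a set $C'\subseteq U$ with one vertex per matching edge, plus a bounded set of triangle exceptions, that covers all $U$--$I$ edges.

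We then run the same decomposition as above. The edges inside $U$ contribute $s_k(G[U])=2e(G[U])$ because $k\ge 2\nu$. The bipartite part has a cover side of size about $\nu$, so the bound $s_k(B)\le e(B)+k\nu-\nu(\nu-1)$ from the previous paragraph applies. The remaining task is to check that the surplus $2e(G[U])-e(G[U])=e(G[U])$ is absorbed by the $\nu(\nu-1)$ saving together with the slack coming from $k\le n-2$. Here Gallai--Edmonds-type counting of edges inside $U$ relative to $C'$ will be needed. The triangle exceptions and the boundary case $k=n-2$ must be handled separately; I anticipate this case analysis to be the second delicate point.
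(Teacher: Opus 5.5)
\textbf{There is a genuine gap, and it sits exactly at the step you flagged.} The inequality $s_t(B)\le e(B)+t$ is false, and so is its equivalent $s_k(B)\le e(B)+kt-t(t-1)$ for $k\ge t$.

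Take $G=B=P_5=u_1c_1u_2c_2u_3$. Its unique minimum vertex cover is $C=\{c_1,c_2\}$, so $H$ is empty, and set $t=k=2$. The two largest Laplacian eigenvalues of $P_5$ are $2+2\cos(\pi/5)$ and $2+2\cos(2\pi/5)$. Hence $s_2(P_5)=4+\sqrt5>6=e(B)+kt-t(t-1)$.

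The split $G=G[C]\cup B$ is itself the wrong tool. Write $\eps_k=s_k-e$ and add the edge $c_1c_2$ to $P_5$. The resulting graph still has unique minimum vertex cover $\{c_1,c_2\}$. Ky Fan applied with the exact values of the pieces gives only $\eps_2\le\eps_2(K_2)+\eps_2(P_5)=1+\sqrt5$, which exceeds the target $kt-\binom t2=3$. So no alternative proof of the bipartite step, whether through $B'$ or by induction, can rescue this route.

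The paper avoids splitting altogether:
\begin{itemize}
\item $I$ is a clique of order $a=n-t$ in $\overline G$.
\item Weyl monotonicity gives $\lambda_i(\overline G)\ge a$ for $i\le a-1$.
\item Brouwer's inequality at index $a-1$ then gives $s_q(\overline G)\le e(\overline G)+qa-\binom a2$ for $0\le q\le a-1$.
\item The identity $s_k(G)=2e(G)-qn+s_q(\overline G)$ with $q=n-k-1$ converts this exactly into $s_k(G)\le e(G)+kt-\binom t2$.
\end{itemize}

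\textbf{Conjecture 2.} Your case $k\le 2\nu-1$ matches the paper. The case $k\ge 2\nu$ has two problems. It relies on the false bipartite bound, and its split into $G[U]$ and the $U$--$I$ edges is lossy on the extremal example itself. For $K_{1,n-1}$ with $2\le k\le n-2$ you get $G[U]=K_2$ and $B=K_{1,n-2}$. Then $\eps_k(G[U])+\eps_k(B)=1+k>k=k\nu$, while $\eps_k(K_{1,n-1})=k$ exactly, so there is no slack from $k\le n-2$ to absorb the excess.

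The missing idea is to split along an Edmonds odd set cover $\{v_1,\dots,v_r,S_1,\dots,S_p\}$ of weight $\nu$ instead of a maximum matching:
\begin{itemize}
\item The edges meeting $R=\{v_1,\dots,v_r\}$ have $R$ as a vertex cover, so they satisfy $\eps_k\le kr$ by the Das--Mojallal--Gutman bound.
\item Write $|S_i|=2a_i+1$ and $\mu=\sum a_i$. The edges inside the odd sets $S_i$ are bounded crudely by the trace: $\eps_k\le e\le\sum a_i(2a_i+1)\le k\mu$.
\item This last step uses $k\ge 2\nu$. When $r=0$ it also uses the hypothesis $n\ge 2\nu+2$ with no isolated vertices, which forces at least two positive $a_i$.
\end{itemize}
Subadditivity then gives $\eps_k(G)\le k(r+\mu)=k\nu$.
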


The proofs of Conjectures~\ref{conj:matching} and \ref{conj:cover}  will be presented in Sections \ref{sec:matching} and \ref{sec:cover}, respectively.
 
\section{Preliminaries}
We denote $\eps_k(G)=s_k(G)-e(G)$ for $0\le k\le n$, where $s_0(G)=0$.
We first present some upper bounds of  $\eps_k(G)$.

\begin{lemma}[Haemers, Mohammadian, and Tayfeh-Rezaie~\cite{HaemersMohammadianTayfehRezaie2010}]\label{lem:subadditive}
Let $G_1,\ldots,G_t$ be edge-disjoint graphs and let $G$ be their union.  Then,
for every $1\le k\le |V(G)|$,
\[
 \eps_k(G)\le \sum_{j=1}^t \eps_k(G_j),
\]
where isolated vertices may be added to the graphs so that the Laplacian
matrices have the same order.
\end{lemma}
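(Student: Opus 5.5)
The plan is to use Ky Fan's maximum principle. For a real symmetric matrix $M$, the sum of its $k$ largest eigenvalues equals
\[
\max\{\Tr(P^{T}MP) : P\in\mathbb{R}^{n\times k},\ P^{T}P=I_k\},
\]
the maximum being taken over all $n\times k$ matrices with orthonormal columns. This makes $s_k$ a subadditive function on symmetric matrices, and the lemma should follow directly.

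First I put every $G_j$ on the common vertex set $V(G)$ by adding isolated vertices. Adding isolated vertices only appends zero eigenvalues. Since Laplacian eigenvalues are nonnegative, $s_k(G_j)$ is therefore unchanged for every $k$ up to the enlarged order, and so is $e(G_j)$. The lemma thus only needs to be shown for $n\times n$ Laplacians on a common vertex set.

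Second, because the $G_j$ are edge-disjoint and their union is $G$, the Laplacian is additive over edges: $L(G)=\sum_{j=1}^t L(G_j)$. This holds because $L(G)=\sum_{uv\in E(G)}(e_u-e_v)(e_u-e_v)^{T}$ and each edge of $G$ lies in exactly one $G_j$. For the same reason $e(G)=\sum_j e(G_j)$.

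Third, I fix a maximizing $P$ for $L(G)$ in Ky Fan's principle and bound term by term:
\[
s_k(G)=\Tr(P^{T}L(G)P)=\sum_{j}\Tr(P^{T}L(G_j)P)\le \sum_j s_k(G_j).
\]
Subtracting $e(G)=\sum_j e(G_j)$ from both sides gives $\eps_k(G)\le\sum_j\eps_k(G_j)$, which is the claim.

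No step here is a real obstacle. The only points needing care are stating Ky Fan's principle in the trace form above and checking that padding with isolated vertices changes neither $s_k$ nor $e$. In particular, $k$ may exceed the original order of some $G_j$; this is harmless because the extra eigenvalues are zero.
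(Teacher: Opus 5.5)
Your proof is correct. It is the standard argument for this lemma: Ky Fan's maximum principle gives subadditivity of $s_k$, you apply it to $L(G)=\sum_j L(G_j)$, and you subtract $e(G)=\sum_j e(G_j)$. The paper itself gives no proof and only cites Haemers, Mohammadian, and Tayfeh-Rezaie, whose proof is this same Ky Fan argument.
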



\begin{lemma}[Das, Mojallal, and Gutman \cite{Das2015}]\label{lem:coverbasic}
For every graph $G$ and every $1\le k\le |V(G)|$,
\[
 \eps_k(G)\le k\tau(G).
\]
\end{lemma}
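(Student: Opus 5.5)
The plan is to decompose $G$ into $\tau(G)$ edge-disjoint stars and then combine the subadditivity of Lemma~\ref{lem:subadditive} with a direct computation of $\eps_k$ for a star.

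\textbf{Step 1 (decomposition).} Let $C=\{c_1,\ldots,c_\tau\}$ be a minimum vertex cover of $G$, where $\tau=\tau(G)$. If $\tau=0$, then $G$ has no edges, so $s_k(G)=0=e(G)$ and $\eps_k(G)=0$, and there is nothing to prove. Otherwise, assign each edge $e\in E(G)$ to the cover vertex $c_j$ of smallest index that lies on $e$; such a vertex exists because $C$ is a cover. Let $S_j$ be the graph formed by the edges assigned to $c_j$. Every edge of $S_j$ contains $c_j$, so $S_j$ is a star $K_{1,m_j}$ centred at $c_j$ (possibly with $m_j=0$). The stars $S_1,\ldots,S_\tau$ are pairwise edge-disjoint and their union is $G$. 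After padding each $S_j$ with isolated vertices up to order $n$, Lemma~\ref{lem:subadditive} gives
\[
\eps_k(G)\le \sum_{j=1}^{\tau}\eps_k(S_j).
\]

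\textbf{Step 2 (stars).} It remains to show that $\eps_k(K_{1,m})\le k$ for every $m\ge 0$ and $k\ge 1$. Adding isolated vertices only adds zero eigenvalues, so it does not change $\eps_k$.
\begin{itemize}
\item If $m=0$, the star has no edges and $\eps_k=0$.
\item If $m\ge 1$, the Laplacian spectrum of $K_{1,m}$ is $m+1$, then $1$ with multiplicity $m-1$, then zeros. Hence
\[
s_k(K_{1,m})=m+1+\min(k-1,m-1),
\]
and since $e(K_{1,m})=m$,
\[
\eps_k(K_{1,m})=1+\min(k-1,m-1)\le k.
\]
\end{itemize}

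\textbf{Conclusion.} Summing the bound of Step 2 over the $\tau$ stars gives $\eps_k(G)\le k\tau(G)$. There is no genuine obstacle here. The only point needing care is that the decomposition is edge-disjoint, which the smallest-index assignment ensures, and that isolated padding leaves $\eps_k$ unchanged, so Lemma~\ref{lem:subadditive} applies.
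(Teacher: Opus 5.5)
Your proof is correct. The paper gives no proof of this lemma; it is quoted from Das, Mojallal and Gutman and used as a black box, so there is no in-paper argument to compare against.

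Your derivation depends only on Lemma~\ref{lem:subadditive}. The smallest-index assignment splits $E(G)$ into $\tau(G)$ edge-disjoint stars centred at the cover vertices. The star spectrum then gives $\eps_k(K_{1,m})=1+\min(k-1,m-1)=\min(k,m)\le k$, and isolated-vertex padding does not affect $\eps_k$. The degenerate cases ($\tau=0$, or an empty $S_j$) are handled.

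Your route buys two things beyond the cited statement:
\begin{itemize}
\item It never uses minimality of $C$, so it proves $\eps_k(G)\le k|C|$ for every vertex cover $C$. This is exactly the form needed in Case~2 of the proof of Theorem~\ref{thm:matching}, where $R$ is just some vertex cover of $G_1$ of size $r$.
\item It yields the slightly sharper bound $\eps_k(G)\le\sum_{j}\min(k,m_j)$.
\end{itemize}
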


\begin{lemma}\label{lem:trace}
For every graph $G$ and every $1\le k\le |V(G)|$,
\[
 \eps_k(G)\le e(G).
\]
\end{lemma}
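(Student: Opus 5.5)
The inequality $\eps_k(G)\le e(G)$ is equivalent to $s_k(G)\le 2e(G)$, so it suffices to bound every partial sum of Laplacian eigenvalues by the full trace. The plan is to compare $s_k(G)$ with $s_n(G)$ and then evaluate $s_n(G)$ from the diagonal of $L(G)$.

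First, $L(G)$ is positive semidefinite. Indeed, for any real vector $x$,
\[
x^{\top}L(G)x=\sum_{uv\in E(G)}(x_u-x_v)^2\ge 0 .
\]
Hence every eigenvalue satisfies $\lambda_i(G)\ge 0$, which matches the ordering stated in the introduction with $\lambda_n(G)=0$. Consequently the partial sums are nondecreasing in $k$, and for every $1\le k\le n$,
\[
s_k(G)\le s_n(G)=\Tr L(G).
\]

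Second, I evaluate the trace. The diagonal entries of $L(G)$ are the degrees, so by the handshake lemma
\[
\Tr L(G)=\sum_{v\in V(G)} d(v)=2e(G).
\]
Combining the two steps gives $s_k(G)\le 2e(G)$, that is, $\eps_k(G)=s_k(G)-e(G)\le e(G)$.

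There is no real obstacle in this argument. The only point to state explicitly is the nonnegativity of the Laplacian eigenvalues, which is what allows the terms $\lambda_{k+1}(G),\dots,\lambda_n(G)$ to be dropped from $s_n(G)$.
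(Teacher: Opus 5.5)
Your proof is correct and matches the paper's: nonnegativity of the Laplacian eigenvalues gives $s_k(G)\le \Tr L(G)=2e(G)$, which rearranges to $\eps_k(G)\le e(G)$. The only addition is that you justify positive semidefiniteness via the quadratic form $\sum_{uv\in E(G)}(x_u-x_v)^2$, which the paper takes for granted.
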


\begin{proof}
The Laplacian matrix has trace equal to the sum of all vertex degrees, hence
\[
 \sum_{i=1}^{|V(G)|}\lambda_i(G)=\Tr L(G)=2e(G).
\]
Since all Laplacian eigenvalues are non-negative, every partial sum is at most
the total sum.  Thus $s_k(G)\le 2e(G)$, and consequently
$\eps_k(G)=s_k(G)-e(G)\le e(G)$.
\end{proof}

We also use Edmonds' characterization of the matching number.

\begin{definition}
An \emph{odd set cover} of a graph $G=(V,E)$ is a family
\[
 \mathcal C=\{v_1,\ldots,v_r,S_1,\ldots,S_t\},
\]
where $v_1,\ldots,v_r$ are distinct vertices and $S_1,\ldots,S_t$ are pairwise disjoint
subsets of $V$ of odd cardinality, such that every edge of $G$ is either incident
with one of the vertices $v_j$, or is contained in one of the sets $S_i$.  Its
weight is
\[
 w(\mathcal C)=r+\sum_{i=1}^t \frac{|S_i|-1}{2}.
\]
\end{definition}

\begin{theorem}[Edmonds~\cite{edmonds}]\label{thm:edmonds}
For every graph $G$,
\[
 \nu(G)=\min\{w(\mathcal C):\mathcal C \text{ is an odd set cover of }G\}.
\]
\end{theorem}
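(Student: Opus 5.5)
We would prove the two inequalities $\nu(G)\le w(\mathcal C)$ for every odd set cover $\mathcal C$, and $\nu(G)\ge w(\mathcal C)$ for some well-chosen $\mathcal C$. The second one we would reduce to the Tutte--Berge formula
\[
 \nu(G)=\min_{U\subseteq V}\frac{n+|U|-\mathrm{odd}(G-U)}{2},
\]
where $\mathrm{odd}(G-U)$ is the number of components of $G-U$ with an odd number of vertices. We take this formula as the classical input.

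\textbf{Weak duality.} Let $M$ be any matching and $\mathcal C=\{v_1,\ldots,v_r,S_1,\ldots,S_t\}$ an odd set cover. Every edge of $M$ is either incident with some $v_j$ or contained in some $S_i$. Each $v_j$ lies on at most one edge of $M$, so at most $r$ edges of $M$ are of the first kind. The edges of $M$ inside $S_i$ form a matching on $|S_i|$ vertices, and $|S_i|$ is odd, so there are at most $(|S_i|-1)/2$ of them. Summing gives $|M|\le w(\mathcal C)$, hence $\nu(G)\le\min w(\mathcal C)$.

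\textbf{Construction of an optimal cover.} Choose $U$ attaining the minimum in the Tutte--Berge formula, and build $\mathcal C$ as follows:
\begin{itemize}
\item put every vertex of $U$ into $\mathcal C$ as a single vertex;
\item for each odd component $C$ of $G-U$, put $S=C$ into $\mathcal C$;
\item for each even component $D$ of $G-U$, pick a vertex $x_D\in D$, put $x_D$ into $\mathcal C$ as a single vertex, and put $D\setminus\{x_D\}$ into $\mathcal C$ as an odd set.
\end{itemize}
The odd sets are pairwise disjoint and the chosen single vertices are distinct. Every edge of $G$ either meets $U$ or lies inside one component of $G-U$. In an even component $D$, an edge either contains $x_D$ or lies inside $D\setminus\{x_D\}$. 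So $\mathcal C$ is an odd set cover.

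\textbf{Weight computation.} The weight of $\mathcal C$ is
\[
 |U|+\sum_{C\text{ odd}}\frac{|C|-1}{2}+\sum_{D\text{ even}}\Bigl(1+\frac{|D|-2}{2}\Bigr)
 =|U|+\frac{n-|U|-\mathrm{odd}(G-U)}{2},
\]
which equals $\frac{n+|U|-\mathrm{odd}(G-U)}{2}=\nu(G)$. Combined with weak duality, this gives the theorem.

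\textbf{Main obstacle.} The substantive step is the Tutte--Berge formula itself. If it is not taken as given, we would derive it from Tutte's $1$-factor theorem. Add $d=\max_U(\mathrm{odd}(G-U)-|U|)$ new vertices, each joined to everything; then check that the enlarged graph has a perfect matching, and delete the new vertices. Alternatively, we could read it off the Gallai--Edmonds decomposition, taking $U=A(G)$. Everything else in the argument is routine bookkeeping.
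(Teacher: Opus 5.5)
The paper gives no proof of this statement. It imports it from Edmonds as a black box, and uses it only for the existence of an odd set cover of weight exactly $\nu(G)$ in Case~2 of the proof of Theorem~\ref{thm:matching}.

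Your argument is correct and is essentially the textbook derivation from the Tutte--Berge formula. The weak-duality count is right: matching edges meeting some $v_j$ number at most $r$, the remaining ones lie inside some $S_i$ with at most $(|S_i|-1)/2$ there, and overcounting edges of both kinds only helps. Your conversion of a Tutte--Berge minimizer $U$ into a cover is also valid. It takes singletons from $U$, odd components as odd sets, and splits each even component $D$ into the singleton $x_D$ plus the odd set $D\setminus\{x_D\}$. This is a genuine odd set cover of weight exactly $\frac{n+|U|-\mathrm{odd}(G-U)}{2}$.

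You only need the ``hard'' inequality $\nu(G)\ge\min_U\frac{n+|U|-\mathrm{odd}(G-U)}{2}$, because your weak-duality step supplies the other direction. Your sketch via Tutte's theorem on $G$ joined to $d$ universal vertices gives exactly this, provided you record three points:
\begin{itemize}
\item $d\ge 0$ (take $U=\emptyset$);
\item $d\equiv n\pmod 2$ (because $\mathrm{odd}(G-U)\equiv n-|U|$), so the enlarged graph has even order;
\item deleting any set that misses some new vertex leaves a connected graph, while deleting all new vertices together with $U$ leaves $G-U$, which has at most $|U|+d$ odd components by the choice of $d$.
\end{itemize}

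You can also avoid the splitting step. Choose $U$ maximal among minimizers. Moving a vertex of an even component into $U$ raises $|U|$ by one and $\mathrm{odd}(G-U)$ by at least one, so it cannot increase the Tutte--Berge expression. Hence maximality forces every component of $G-U$ to be odd.

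Compared with the bare citation, your route costs some space. In exchange it makes the note self-contained modulo Tutte's $1$-factor theorem, and it shows that the odd-set-cover formula is a repackaging of Tutte--Berge. It also yields a cover with extra structure the paper does not need: singletons disjoint from the odd sets, and odd sets drawn from components of $G-U$.
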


The next elementary complement lemma will be used for the vertex-cover inequality.
It was proved by Brouwer and Haemers~\cite{brouwer2012}.
For the sake of completeness, we include a proof here.

\begin{lemma}[Brouwer and Haemers~\cite{brouwer2012}] \label{lem:complement}
Let $G$ be a graph on $n$ vertices and let $\overline G$ be its complement.  Then
\[
 \lambda_i(\overline G)=n-\lambda_{n-i}(G)
 \qquad (1\le i\le n-1).
\]
Consequently, if $0\le k\le n-1$ and $q=n-k-1$, then
\[
 s_k(G)=2e(G)-qn+s_q(\overline G).
\]
\end{lemma}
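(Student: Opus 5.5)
The complement lemma (Lemma~\ref{lem:complement}) is pure linear algebra, and I will prove it from the identity
\[
 L(G)+L(\overline G)=nI-J,
\]
where $J$ is the all-ones matrix. This identity is checked entrywise. On the diagonal, $d_G(u)+d_{\overline G}(u)=n-1$. Off the diagonal, each pair $u\ne v$ is an edge of exactly one of $G$ and $\overline G$, so the entry is $-1$.

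Next I choose an orthonormal eigenbasis of $L(G)$ that contains $\mathbf{1}/\sqrt n$. This is possible because $L(G)$ is symmetric and $L(G)\mathbf{1}=0$. Every other basis vector $x$ is orthogonal to $\mathbf 1$, so $Jx=0$. If $L(G)x=\mu x$, then
\[
 L(\overline G)x=(nI-J-L(G))x=(n-\mu)x.
\]
Also $L(\overline G)\mathbf 1=0$. Hence the spectrum of $L(\overline G)$ consists of $0$ together with the values $n-\mu$, where $\mu$ runs over the $n-1$ eigenvalues of $L(G)$ remaining after one zero (the one for $\mathbf 1$) is removed. These remaining eigenvalues are $\lambda_1(G),\dots,\lambda_{n-1}(G)$.

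Since $\lambda_j(G)\le n$ for every $j$, all the values $n-\lambda_j(G)$ are nonnegative, so the eigenvalue $0$ sits at the bottom of the ordering for $\overline G$. The map $\mu\mapsto n-\mu$ reverses order. Therefore the $i$-th largest eigenvalue of $\overline G$ is $n-\lambda_{n-i}(G)$ for $1\le i\le n-1$. The only nontrivial fact needed here is the bound $\lambda_1(G)\le n$, and it follows from the same decomposition: $L(G)=nI-J-L(\overline G)$, and on $\mathbf 1^\perp$ this is at most $nI$ because $L(\overline G)$ is positive semidefinite. I expect keeping this indexing straight to be the only delicate step.

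For the consequence, fix $q=n-k-1$ and sum the first $q$ complement eigenvalues:
\[
 s_q(\overline G)=\sum_{i=1}^{q}\bigl(n-\lambda_{n-i}(G)\bigr)=qn-\sum_{j=k+1}^{n-1}\lambda_j(G).
\]
Here the indices $n-i$ run from $n-1$ down to $n-q=k+1$. Using $\lambda_n(G)=0$ and the trace identity $\sum_{j=1}^{n}\lambda_j(G)=2e(G)$ from Lemma~\ref{lem:trace}, we get
\[
 \sum_{j=k+1}^{n-1}\lambda_j(G)=2e(G)-s_k(G).
\]
Substituting this into the previous display gives $s_q(\overline G)=qn-2e(G)+s_k(G)$, which rearranges to the claim. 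The edge cases $k=n-1$ (so $q=0$) and $k=0$ (so $q=n-1$) fit the same computation, with empty sums read as zero.
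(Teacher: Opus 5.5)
Your proof is correct and follows essentially the same route as the paper. Both arguments use the identity $L(\overline G)=nI-J-L(G)$, restrict to $\mathbf 1^{\perp}$ where $J$ vanishes, and then apply the same index shift and trace identity to obtain the partial-sum formula. Your explicit check that $\lambda_1(G)\le n$, which guarantees that the eigenvalue $0$ of $L(\overline G)$ sits last in the ordering, is a small step the paper leaves implicit.
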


\begin{proof}
The Laplacian of the complete graph $K_n$ is $L(K_n)=nI-J$, where $J$ is the all-ones matrix.  Since $G$ and $\overline G$ partition the edge set of $K_n$,
\[
 L(\overline G)=L(K_n)-L(G)=nI-J-L(G).
\]
Both $L(G)$ and $L(\overline G)$ annihilate the all-ones vector ${\bf 1}$.  On the orthogonal complement ${\bf 1}^{\perp}$ the matrix $J$ is zero.  Hence, if
$x\in {\bf 1}^{\perp}$ and $L(G)x=\lambda x$, then
\[
 L(\overline G)x=(nI-L(G))x=(n-\lambda)x.
\]
The eigenvectors corresponding to the non-zero part of the Laplacian spectrum of $G$, together with any additional zero eigenvectors orthogonal to ${\bf 1}$, lies in ${\bf 1}^{\perp}$.  Therefore
the eigenvalues of $L(\overline G)$ are
\[
 n-\lambda_{n-1}(G),\ldots,n-\lambda_1(G),0,
\]
listed in non-increasing order as stated.

Now let $q=n-k-1$.  If $q=0$, then $s_q(\overline G)=0$ and the formula says
$s_{n-1}(G)=2e(G)$, which is true because $\lambda_n(G)=0$.  If $q\ge 1$, then
\[
 s_q(\overline G)=\sum_{i=1}^q \bigl(n-\lambda_{n-i}(G)\bigr)
 =qn-\sum_{j=k+1}^{n-1}\lambda_j(G).
\]
Since $\sum_{j=1}^{n-1}\lambda_j(G)=2e(G)$, rearranging gives
$s_k(G)=2e(G)-qn+s_q(\overline G)$.
\end{proof}

\begin{lemma}\label{lem:clique}
Let $G$ be a graph containing a clique of order $a\ge 1$.  Then, for every
integer $0\le q\le a-1$,
\[
 s_q(G)\le e(G)+qa-\binom{a}{2}.
\]
\end{lemma}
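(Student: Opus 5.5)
The plan is to get the bound from Brouwer's inequality at the single index $k=a-1$, and then step down from $a-1$ to $q$ using a lower bound on the Laplacian eigenvalues that the clique forces.

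First I will apply Theorem~\ref{thm:brouwer} with $k=a-1$. If $a-1>|V(G)|$ this is impossible, but a clique of order $a$ forces $|V(G)|\ge a$, so the index is admissible. When $a=1$ we have $q=0$, and the statement reads $0\le e(G)$, which is trivial. Theorem~\ref{thm:brouwer} gives
\[
 s_{a-1}(G)\le e(G)+\binom{a}{2}.
\]

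Next I will show that $\lambda_i(G)\ge a$ for every $1\le i\le a-1$. Let $K$ be the spanning subgraph of $G$ consisting of the clique edges together with all remaining vertices as isolated vertices. Then $L(G)=L(K)+L(G-E(K))$, and the second summand is positive semidefinite. By Weyl's monotonicity, $\lambda_i(G)\ge \lambda_i(K)$ for all $i$. The Laplacian spectrum of $K_a$ is $a$ with multiplicity $a-1$ together with $0$, so $\lambda_i(K)=a$ for $i\le a-1$. This is the key step, since Brouwer's bound alone at index $q$ is too weak: one checks that $\binom{q+1}{2}-\bigl(qa-\binom a2\bigr)=\binom{a-q}{2}\ge 0$.

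Finally, for $0\le q\le a-1$ I will write
\[
 s_q(G)=s_{a-1}(G)-\sum_{i=q+1}^{a-1}\lambda_i(G)\le e(G)+\binom{a}{2}-(a-1-q)a .
\]
The right-hand side simplifies to $e(G)+qa-\binom a2$, since $\binom a2-(a-1)a=-\binom a2$. This gives the claimed inequality.

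I expect no real obstacle. The only point needing care is the monotonicity of Laplacian eigenvalues under adding edges, which is a direct consequence of Weyl's inequality because every edge Laplacian is positive semidefinite.
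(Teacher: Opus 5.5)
Your proposal is correct and follows essentially the same route as the paper: Brouwer's bound at index $a-1$, the Weyl-monotonicity lower bound $\lambda_i(G)\ge a$ for $i\le a-1$ from the spanning clique subgraph, and then subtracting $\lambda_{q+1},\dots,\lambda_{a-1}$. The one difference is minor: you treat only $a=1$ separately, whereas the paper treats all of $q=0$ by direct edge counting; for $a\ge 2$ your subtraction argument also covers $q=0$, so both versions are complete.
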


\begin{proof}
The case $q=0$ is immediate, because $G$ contains at least the
$\binom a2$ edges of the clique and hence
$0\le e(G)-\binom a2$.
Assume $1\le q\le a-1$.

Let $H$ be the graph obtained from this $a$-clique by adding all remaining
vertices of $G$ as isolated vertices.  Then
$L(G)-L(H)$ is a sum of edge Laplacians and is positive semidefinite.  By Weyl's
monotonicity theorem, $\lambda_i(G)\ge \lambda_i(H)$ for all $i$.
The non-zero Laplacian eigenvalues of $H$ are equal to $a$, with multiplicity
$a-1$.  Therefore $\lambda_i(G)\ge a$ for $1\le i\le a-1$.
Applying Theorem~\ref{thm:brouwer} to $G$ with $a-1$ in place of $k$ gives $s_{a-1}(G)\le e(G)+\binom a2$.
Since every eigenvalue $\lambda_{q+1}(G),\ldots,\lambda_{a-1}(G)$ is at
least $a$, we obtain
\begin{align*}
 s_q(G)
 &=s_{a-1}(G)-\sum_{i=q+1}^{a-1}\lambda_i(G)  \\
 &\le e(G)+\binom a2-a(a-1-q) \\
 &=e(G)+qa-\binom a2.
\end{align*}
This proves the lemma.
\end{proof}

\section{Proof of Conjecture~\ref{conj:matching}}\label{sec:matching}
\begin{theorem}\label{thm:matching}
Let $G$ be a graph with $n$ non-isolated vertices.  If $1\le k\le n-2$, then
\[
 s_k(G)\le e(G)+k\nu(G).
\]
\end{theorem}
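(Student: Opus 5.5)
The plan is to combine Edmonds' theorem (Theorem~\ref{thm:edmonds}) with the subadditivity of $\eps_k$ (Lemma~\ref{lem:subadditive}). Fix a minimum odd set cover $\mathcal C=\{v_1,\dots,v_r,S_1,\dots,S_t\}$, so $w(\mathcal C)=\nu(G)$, and write $|S_i|=2p_i+1$. Partition $E(G)$ into two kinds of pieces:
\begin{itemize}
\item the stars $T_j$, consisting of the edges incident with $v_j$ but not with $v_1,\dots,v_{j-1}$;
\item the graphs $H_i$, consisting of the edges of $G$ inside $S_i$ not incident with any $v_j$.
\end{itemize}
By Lemma~\ref{lem:subadditive},
\[
\eps_k(G)\le \sum_j \eps_k(T_j)+\sum_i \eps_k(H_i).
\]
For each star, Lemma~\ref{lem:coverbasic} gives $\eps_k(T_j)\le k\tau(T_j)\le k$. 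It would therefore suffice to show $\eps_k(H_i)\le kp_i$ for every $i$, since then the total is at most $k\,w(\mathcal C)=k\nu(G)$.

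For a graph $H$ on $2p+1$ vertices, the bound $\eps_k(H)\le kp$ is checked by cases on $k$.
\begin{itemize}
\item If $k\le 2p-1$, Theorem~\ref{thm:brouwer} gives $\eps_k(H)\le\binom{k+1}{2}\le kp$, because $k+1\le 2p$.
\item If $k\ge 2p+1$, Lemma~\ref{lem:trace} gives $\eps_k(H)\le e(H)\le \binom{2p+1}{2}=p(2p+1)\le kp$.
\item The only failing case is $k=2p$, i.e. $|S_i|=k+1$. Here $\eps_{2p}(H)=e(H)$ exactly, and this can exceed $kp=2p^2$ by at most $p$, namely when $H$ has more than $2p^2$ edges.
\end{itemize}
At most one index can be problematic, since the $S_i$ are disjoint and $k$ is fixed. (Several $S_i$ of size $k+1$ could occur, however, so this needs care.)

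The main obstacle is this case $|S_i|=k+1$. Here the hypothesis $k\le n-2$ must be used: $G$ has at least $n-(k+1)\ge 1$ non-isolated vertex outside $S_i$, so the rest of the graph is nonempty and should provide slack. My first attempt is to exploit that the star bound is wasteful. For a star $K_{1,m}$ with $m+1<k$ one has $\eps_k=m<k$, and generally $\eps_k(K_{1,m})=\min(k,m)$ up to small corrections.

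When $H_i$ is dense, I would instead bound $H_i$ together with some adjacent edges. Alternatively, I would rechoose the cover: if $G[S_i]$ has many edges and there is an edge leaving $S_i$, one can try to replace $S_i$ by a vertex set plus a smaller odd set, or show that $\nu$ is strictly larger than the contribution counted. If these pieces are vertex-disjoint from the rest, $s_k$ of a disjoint union is the maximum over splittings $k=k_1+k_2$ of $s_{k_1}+s_{k_2}$. I would apply this with the clique-type argument of Lemma~\ref{lem:clique} (using $\lambda_i\ge$ large for $i\le 2p$) to trade one eigenvalue of $H_i$ for one from the other component. This would show the $p$ excess is absorbed, because the component outside contributes $\nu\ge1$ while using fewer than $k$ eigenvalues.

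I expect this absorption step to be the genuinely delicate part of the proof, and I would first test it on $K_{2p+1}\cup K_2$ with $k=2p$ to calibrate the bound.
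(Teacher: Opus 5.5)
You locate the difficulty correctly, but the case $|S_i|=k+1$ is left open, and it cannot be closed inside your framework. Once $\eps_k(G)$ has been split via Lemma~\ref{lem:subadditive} into stars and the graphs $H_i$, even the exact values of the pieces can add up to more than $k\nu(G)$.

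Take $G=K_{2p+1}\sqcup K_{1,m}$ with $p\ge 1$, $m\ge\max\{2p,3\}$ and $k=2p\le n-2$. Then $\nu(G)=p+1$. Every minimum odd set cover consists of the star centre together with $V(K_{2p+1})$, up to weight-zero singletons. So your pieces are $K_{1,m}$ and $K_{2p+1}$, with $\eps_{2p}(K_{1,m})=2p$ and $\eps_{2p}(K_{2p+1})=p(2p+1)$. Their sum $2p^2+3p$ exceeds $k\nu(G)=2p^2+2p$. Hence the star has no slack to absorb the excess, and there is no better cover to switch to. Your eigenvalue-trading idea also has no general form once stars share vertices with $S_i$. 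Separately, your claim that at most one index is problematic is false, for example for two disjoint copies of $K_{k+1}$.

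The missing idea is to make the case distinction globally, on $k$ versus $2\nu$, rather than on $k$ versus $|S_i|$.

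If $k\le 2\nu-1$, apply Theorem~\ref{thm:brouwer} to $G$ itself, with no decomposition: $\eps_k(G)\le\binom{k+1}{2}=k\cdot\frac{k+1}{2}\le k\nu$. This disposes of the example above and of every configuration with several sets of size $k+1$.

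If $k\ge 2\nu$, then $2p_i\le 2\nu\le k$ for all $i$. If $2p_i=k$, this would force $p_i=\nu$, hence $r=0$ and all other $S_j$ are singletons. Then every edge lies in $S_i$, and since $G$ has no isolated vertices, $n\le 2\nu+1\le k+1$, contradicting $k\le n-2$. So $2p_i+1\le k$ for every $i$. Your trace bound then gives $\eps_k(H_i)\le p_i(2p_i+1)\le kp_i$, and your decomposition finishes the proof.

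This is the paper's route: Case 1 uses Brouwer's inequality directly, and Case 2 applies Lemma~\ref{lem:coverbasic} to the edges meeting $\{v_1,\dots,v_r\}$ and the trace bound to the rest. The paper bounds the union of the $H_i$ at once through a sum-of-squares estimate, with a separate argument when $r=0$. Your per-set observation $2p_i+1\le k$ reaches the same bound slightly more directly.
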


\begin{proof}
Deleting isolated vertices does not change $e(G)$, $\nu(G)$, or $s_k(G)$.  Thus, we may assume that $G$ has no isolated vertices
and that $|V(G)|=n$.

For convenience, we write $\nu=\nu(G)$.  We split the proof into two cases.

\smallskip
\noindent{\bfseries Case 1. $k\le 2\nu-1$.}

By Theorem~\ref{thm:brouwer},
\[
 \eps_k(G)=s_k(G)-e(G)\le \binom{k+1}{2}=\frac{k(k+1)}{2}.
\]
The assumption $k\le 2\nu-1$ is equivalent to $(k+1)/2\le \nu$.  Hence
\[
 \binom{k+1}{2}=k\frac{k+1}{2}\le k\nu,
\]
and therefore $s_k(G)\le e(G)+k\nu$.

\smallskip
\noindent{\bfseries Case 2. $k\ge 2\nu$.}

Since $k\le n-2$, we have $n\ge k+2\ge 2\nu+2$.

By Theorem~\ref{thm:edmonds}, choose an odd set cover
$\mathcal C=\{v_1,\ldots,v_r,S_1,\ldots,S_t\}$ with weight $w(\mathcal C)=\nu$.  
Let
\[
 |S_i|=2a_i+1 \quad (a_i\ge 0)
 \quad \text{and} \quad
 \mu=\sum_{i=1}^t a_i.
\]
Clearly, we have $r+\mu=\nu$.
Let
\[
 R=\{v_1,\ldots,v_r\},\qquad
 E_1=\{e\in E(G):e\cap R\ne\emptyset\},
\]
and define
\[
 G_1=(V(G),E_1), \qquad G_2=(V(G),E(G)\setminus E_1).
\]
The graphs $G_1$ and $G_2$ are edge-disjoint, and their union is $G$.

Note that the set $R$ is a vertex cover of $G_1$.  
By Lemma~\ref{lem:coverbasic}, $\eps_k(G_1)\le kr$.
By the definition of $\mathcal{C}$, every edge of $G_2$ is contained in one of the odd sets $S_i$.  Since the sets
$S_i$ are pairwise disjoint, we have 
\begin{equation}\label{3.4}
e(G_2)\le \sum_{i=1}^t \binom{2a_i+1}{2}=\sum_{i=1}^t a_i(2a_i+1).   
\end{equation}
By Lemma~\ref{lem:trace}, $\eps_k(G_2)\le e(G_2)$.
It remains to prove
\begin{equation}\label{3.6}
e(G_2)\le k\mu. 
\end{equation}

First, suppose that $r\ge 1$.  Then, by $r+\mu=\nu$ and the present case assumption,
$k\ge 2\nu=2(r+\mu)\ge 2\mu+2$.
Moreover $\sum_i a_i^2\le (\sum_i a_i)^2=\mu^2$.  Using \eqref{3.4},
\[
 e(G_2)
 \le 2\sum_{i=1}^t a_i^2+
\sum_{i=1}^t a_i
 \le 2\mu^2+
\mu
 \le (2\mu+2)\mu
 \le k\mu.
\]
Thus \eqref{3.6} holds when $r\ge 1$.

Now suppose that $r=0$.  Then $\mu=\nu$ and $k\ge 2\mu$.  
Then $n\ge 2\nu+2=2\mu+2$.
We claim that at least two of the integers $a_i$ are positive.  Indeed, if exactly
one of them were positive, say $a_j=\mu$, then every edge of $G$ would be
contained in $S_j$, because $r=0$.  Since $G$ has no isolated vertices, every
vertex of $G$ would have to lie in $S_j$.  Hence $n\le |S_j|=2\mu+1$,
which is a contradiction to $n\ge 2\mu+2$.  Therefore, at least two $a_i$ are positive, and necessarily $\mu\ge 2$.  For non-negative
integers $a_i$ with sum $\mu$ and at least two positive terms, the sum of squares
is maximized by the distribution $\mu-1,1,0,\ldots,0$.  Hence
\[
 \sum_{i=1}^t a_i^2\le (\mu-1)^2+1.
\]
Therefore, it follows from \eqref{3.4} that
\begin{align*}
 e(G_2)\le 2\sum_{i=1}^t a_i^2+\mu \le 2\bigl((\mu-1)^2+1\bigr)+\mu =2\mu^2-3\mu+4 \le 2\mu^2 \le k\mu.
\end{align*}
Thus \eqref{3.6} holds also when $r=0$.

Combining Lemma~\ref{lem:subadditive}, $\eps_k(G_1)\le kr$, and $\eps_k(G_2)\le e(G_2)\le k\mu$, we get
\[
 \eps_k(G)
 \le \eps_k(G_1)+\eps_k(G_2)
 \le kr+k\mu
 =k(r+\mu)
 =k\nu.
\]
Hence $s_k(G)\le e(G)+k\nu(G)$.
\end{proof}

\begin{remark}
The bound in Theorem \ref{thm:matching} is sharp. Indeed, the Laplacian spectrum of $K_{1,n-1}$ consists of $n$, $1$ with multiplicity $n-2$, and $0$. Therefore,
$s_k(K_{1,n-1})=e(K_{1,n-1})+k\nu(K_{1,n-1})$
for every $1\le k\le n-2$.
\end{remark}

\section{Proof of Conjecture~\ref{conj:cover}}\label{sec:cover}
\begin{theorem}\label{thm:cover}
Let $G=(V,E)$ be a graph on $n$ vertices and put $t=\tau(G)$.  If
$t\le k\le n-1$, then
\[
 s_k(G)\le e(G)+kt-\binom t2.
\]
Moreover, $s_n(G)\le e(G)+nt-\binom{t+1}{2}$. In particular, Conjecture \ref{conj:cover} holds.
\end{theorem}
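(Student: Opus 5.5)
The plan is to pass to the complement via Lemma~\ref{lem:complement} and apply Lemma~\ref{lem:clique} to $\overline G$. Let $T$ be a minimum vertex cover of $G$, so $|T|=t$. Then $I=V\setminus T$ is independent in $G$, of size $a=n-t$, and hence $I$ is a clique of order $a$ in $\overline G$. If $t=n$, then $k=n-1$ forces $t\le n-1$, a contradiction, so for $k\le n-1$ we have $a\ge 1$.

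Fix $t\le k\le n-1$ and put $q=n-k-1\ge 0$. The condition $k\ge t$ is exactly $q\le n-t-1=a-1$, so Lemma~\ref{lem:clique} applies to $\overline G$ and gives
\[
s_q(\overline G)\le e(\overline G)+qa-\binom a2 .
\]
The case $q=0$ is also covered by that lemma. Substitute this into the identity $s_k(G)=2e(G)-qn+s_q(\overline G)$ from Lemma~\ref{lem:complement}, and use $e(\overline G)=\binom n2-e(G)$. This yields
\[
s_k(G)\le e(G)+\binom n2-\binom{n-t}{2}-q(n-a)=e(G)+\binom n2-\binom{n-t}2-(n-k-1)t .
\]
A direct computation gives $\binom n2-\binom{n-t}2=nt-\binom{t+1}2$. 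Therefore the right-hand side equals
\[
e(G)+kt+t-\binom{t+1}2=e(G)+kt-\binom t2,
\]
which is the first claim.

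For $k=n$ we have $s_n(G)=2e(G)$, so it suffices to bound $e(G)$. Every edge meets $T$. At most $\binom t2$ edges lie inside $T$, and at most $t(n-t)$ edges join $T$ to $I$. Hence
\[
e(G)\le \binom t2+t(n-t)=nt-\binom{t+1}2,
\]
which gives $s_n(G)\le e(G)+nt-\binom{t+1}2$. Since $\binom{t+1}2\ge\binom t2$, this bound is at most $e(G)+nt-\binom t2$. Combined with the range $t\le k\le n-1$, this establishes Conjecture~\ref{conj:cover} for all $t\le k\le n$.

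I do not expect a real obstacle here. The substantive input, Brouwer's inequality, is already packaged inside Lemma~\ref{lem:clique}. The only points needing care are checking that the index range $0\le q\le a-1$ matches exactly the hypothesis $k\ge t$, and getting the binomial arithmetic right.
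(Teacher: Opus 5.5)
Your proposal is correct and follows the paper's proof essentially step for step: you pass to the complement via Lemma~\ref{lem:complement}, apply Lemma~\ref{lem:clique} to the clique $V\setminus T$ in $\overline G$ with $q=n-k-1\le a-1$, carry out the same binomial simplification, and use the same edge count $e(G)\le\binom t2+t(n-t)$ for $k=n$. Your explicit remark that $\binom{t+1}{2}\ge\binom t2$ gives the $k=n$ case of Conjecture~\ref{conj:cover} spells out a step the paper leaves implicit.
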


\begin{proof}
If $k=0$, then $t=0$ and the inequality says $0\le e(G)$, so assume $k\ge 1$.
Let $m=e(G)$ and let $C$ be a minimum vertex cover.
Then $|C|=t$ and $I=V\setminus C$ is an independent set of $G$.  Hence $I$ is a clique in the complement $\overline G$.  Put
\[
 a=|I|=n-t
 \quad \text{and} \quad
 \overline m=e(\overline G)=\binom n2-m.
\]

First, we assume $k\le n-1$.  Let $q=n-k-1$.
Since $k\ge t$, we have $q=n-k-1\le n-t-1=a-1$.
Also $q\ge 0$ because $k\le n-1$.  Applying Lemma~\ref{lem:clique} to the graph
$H=\overline G$, which contains the clique $I$ of order $a$, gives
\begin{equation}\label{4.1}
s_q(\overline G)\le \overline m+qa-\binom a2.  
\end{equation}
By Lemma~\ref{lem:complement},
\begin{equation}\label{4.2}
s_k(G)=2m-qn+s_q(\overline G).    
\end{equation}

Substituting \eqref{4.1} into \eqref{4.2}, and using $a=n-t$, yields
\begin{align}\label{4.3}
 s_k(G)
 &\le 2m-qn+\overline m+q(n-t)-\binom{n-t}{2} \nonumber\\
 &=2m-qn+\left(\binom n2-m\right)+q(n-t)-\binom{n-t}{2} \nonumber\\
 &=m+\binom n2-qt-\binom{n-t}{2}. 
\end{align}
Since $q=n-k-1$,  we conclude
\begin{align}\label{4.4}
 \binom n2-qt-\binom{n-t}{2}
 &=\binom n2-(n-k-1)t-\binom{n-t}{2} \nonumber\\
 &=kt-\binom t2. 
\end{align}
Combining \eqref{4.3} and \eqref{4.4} proves $s_k(G)\le m+kt-\binom t2$ for $t\le k\le n-1$.

It remains to consider $k=n$.  Since all Laplacian eigenvalues sum to the trace,
\begin{equation}\label{4.5}
s_n(G)=\Tr L(G)=2m.    
\end{equation}
Because $C$ is a vertex cover, every edge of $G$ either has both endpoints in
$C$ or has one endpoint in $C$ and the other in $V\setminus C$.  Hence
\begin{equation}\label{4.6}
m\le \binom t2+t(n-t)=nt-\binom{t+1}{2}.    
\end{equation}
Equations \eqref{4.5} and \eqref{4.6} give
\[
 s_n(G)=2m\le m+nt-\binom{t+1}{2},
\]
which is the desired inequality for $k=n$.
\end{proof}
\begin{remark}
The bounds in Theorem \ref{thm:cover} are sharp. For
$1\le t\le n-1$, let
$S_{n,t}$ be the graph obtained from a clique of order $t$ and an independent set of order $n-t$ by adding all edges between the two parts. Then
$\tau(S_{n,t})=t$, and the Laplacian spectrum of $S_{n,t}$
consists of $n$ with multiplicity $t$, $t$ with multiplicity
$n-t-1$, and $0$. One can verify that the equality in Theorem \ref{thm:cover} holds for every $t\le k\le n$.
\end{remark}

\section*{Declarations}
The authors declare that they have no known competing financial interests or personal relationships that could have appeared to influence the work reported in this paper.
\section*{\bf\Large Availability of Data and Materials}  
Not applicable.

\section*{Acknowledgements}
Conjecture~\ref{conj:matching} was  proved independently and concurrently by Huang and Qin, see \cite{HuangQin2026}. 
This research was supported  by National Key R\&D Program of China under grant number 2024YFA1013900 and NSFC under grant number 12471327.

\end{document}